\newtheorem{theorem}{Theorem}[section]
\newtheorem{definition}[theorem]{Definition}
\newtheorem{problem}{Problem}%[theorem]
\newtheorem{proposition}[theorem]{Proposition}
\newtheorem{remark}[theorem]{Remark}
\title{\sc On Integer Additive Set-Sequential Graphs}
\author{\sc N K Sudev}
\affil{\small Department of Mathematics \\ Vidya Academy of Science \& Technology \\ Thalakkottukara, Thrissur - 680501, India.\\ email: {\em sudevnk@gmail.com}} 
\author{\sc K A Germina}
\affil{\small PG \& Research Department of Mathematics \\ Mary Matha Arts \&  Science College \\ Mnanthavady, Wayanad-670645, India. \\ email:{\em srgerminaka@gmail.com}}
\date{}
\begin{document}
\maketitle

\begin{abstract}
A set-labeling of a graph $G$ is an injective function $f:V(G)\to \mathcal{P}(X)$, where $X$ is a finite set of non-negative integers and a set-indexer of $G$ is  a set-labeling such that the induced function $f^{\oplus}:E(G)\rightarrow \mathcal{P}(X)-\{\emptyset\}$ defined by $f^{\oplus}(uv) = f(u){\oplus}f(v)$ for every $uv{\in} E(G)$ is also injective.  A set-indexer $f:V(G)\to \mathcal{P}(X)$  is called a set-sequential labeling of $G$ if $f^{\oplus}(V(G)\cup E(G))=\mathcal{P}(X)-\{\emptyset\}$. A graph $G$ which admits a set-sequential labeling is called a set-sequential graph. An integer additive set-labeling is an injective function $f:V(G)\rightarrow \mathcal{P}(\mathbb{N}_0)$, $\mathbb{N}_0$ is the set of all non-negative integers and an integer additive set-indexer is an integer additive set-labeling such that the induced function $f^+:E(G) \rightarrow \mathcal{P}(\mathbb{N}_0)$ defined by $f^+ (uv) = f(u)+ f(v)$ is also injective. In this paper, we extend the concepts of set-sequential labeling to integer additive set-labelings of graphs and provide some results on them.
\end{abstract}
\textbf{Key words}: Integer additive set-indexers, set-graceful graphs, set-sequential graphs, integer additive set-labeling, integer additive set-sequential labeling, integer additive set-sequential graphs.

\noindent \textbf{AMS Subject Classification : 05C78}

\section{Introduction}

For all  terms and definitions, not defined specifically in this paper, we refer to \cite{FH} and for more about graph labeling, we refer to \cite{JAG}. Unless mentioned otherwise, all graphs considered here are simple, finite and have no isolated vertices. 

All sets mentioned in this paper are finite sets of non-negative integers. We denote the cardinality of a set $A$ by $|A|$. We denote, by $X$, the finite ground set of non-negative integers that is used for set-labeling the elements of $G$ and cardinality of $X$ by $n$. 

The research in graph labeling commenced with the introduction of $\beta$-valuations of graphs in \cite{AR}. Analogous to the number valuations of graphs, the concepts of set-labelings and set-indexers of graphs are introduced in \cite{A1} as follows.

Let $G$ be a $(p,q)$-graph. Let $X$, $Y$ and $Z$ be non-empty sets and $\mathcal{P}(X)$, $\mathcal{P}(Y)$ and $\mathcal{P}(Z)$ be their power sets. Then, the functions $f:V(G)\to \mathcal{P}(X)$, $f:E(G)\to \mathcal{P}(Y)$ and $f:V(G)\cup E(G)\to \mathcal{P}(Z)$ are called the {\em  set-assignments} of vertices, edges and elements of $G$ respectively. By a set-assignment of a graph, we mean any one of them.  A set-assignment is called a {\em set-labeling} or a {\em set-valuation} if it is injective. 

A graph with a set-labeling $f$ is denoted by $(G,f)$ and is referred to as a {\em set-labeled graph} or a {\em set-valued graph}. For a $(p,q)$- graph $G=(V,E)$ and a non-empty set $X$ of cardinality $n$, a {\em set-indexer} of $G$ is defined as an injective set-valued function $f:V(G) \rightarrow \mathcal{P}(X)$ such that the function $f^{\oplus}:E(G)\rightarrow \mathcal{P}(X)-\{\emptyset\}$ defined by $f^{\oplus}(uv) = f(u ){\oplus}f(v)$ for every $uv{\in} E(G)$ is also injective, where $\mathcal{P}(X)$ is the set of all subsets of $X$ and $\oplus$ is the symmetric difference of sets.

\begin{theorem}
\cite{A1} Every graph has a set-indexer.
\end{theorem}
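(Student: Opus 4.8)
The plan is to produce, for an arbitrary $(p,q)$-graph $G$, a single explicit ground set $X$ and an explicit labeling $f$, and then verify the two injectivity requirements directly. Recall that the power set $\mathcal{P}(X)$ under $\oplus$ behaves additively, so $f^{\oplus}(uv)=f(u)\oplus f(v)$ collapses to $\emptyset$ exactly when $f(u)=f(v)$; thus injectivity of $f$ already forces every edge label to be non-empty, and the real content is to arrange that the symmetric differences $f(u)\oplus f(v)$ are pairwise distinct over the edges. The first thing to note is that naive choices fail here: assigning distinct singletons $f(v_i)=\{i\}$ keeps $f$ injective but sends every edge to a two-element set, and generic ``binary'' labelings collide because $\{i\}\oplus\{j\}$ can coincide for different pairs. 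So the labeling must be chosen so that the symmetric difference of any two vertex labels remembers which two vertices produced it.

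The construction I would use is a \emph{chain} (nested) labeling. Order the vertices as $v_1,v_2,\ldots,v_p$, take $X=\{1,2,\ldots,p\}$ so that $n=p$, and define the initial-segment labeling
\begin{equation}
f(v_i)=\{1,2,\ldots,i\}\qquad (1\le i\le p).
\end{equation}
Injectivity of $f$ is immediate, since $|f(v_i)|=i$ takes $p$ distinct values. It then remains only to analyze $f^{\oplus}$ on the edges. For any edge $v_iv_j$ with, say, $i<j$, the containment $f(v_i)\subseteq f(v_j)$ reduces the symmetric difference to a set difference, giving the interval $f^{\oplus}(v_iv_j)=\{i+1,i+2,\ldots,j\}$, which is non-empty because $i<j$.

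The step I expect to carry the weight of the argument is the injectivity of $f^{\oplus}$, and the key observation that resolves it is that the interval $\{i+1,\ldots,j\}$ determines its endpoints: one recovers $j$ as its maximum element and $i$ as its minimum element minus one. Hence an edge label uniquely reconstructs the unordered pair $\{i,j\}$, so distinct edges of $G$ necessarily receive distinct labels and $f^{\oplus}$ is injective into $\mathcal{P}(X)-\{\emptyset\}$. This exhibits $f$ as a set-indexer of $G$, and since $G$ was arbitrary the theorem follows. I would close by remarking that the construction is independent of the edge set $E$, so the bound $n=p$ suffices uniformly for every graph on $p$ vertices.
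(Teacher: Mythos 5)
Your construction is correct, and it is worth noting that the paper itself offers no proof of this statement at all --- it is quoted from Acharya's lecture notes \cite{A1} as background --- so your argument is necessarily self-contained rather than parallel to anything in the text. The chain labeling $f(v_i)=\{1,\ldots,i\}$ does everything you claim: injectivity of $f$ is clear from cardinalities, nestedness turns each symmetric difference into the interval $\{i+1,\ldots,j\}$, and the interval's minimum and maximum recover the unordered pair $\{i,j\}$, so $f^{\oplus}$ is injective with non-empty values. However, your motivating remark dismissing the naive choice is wrong, and in an instructive way: assigning distinct singletons $f(v_i)=\{i\}$ \emph{does} yield a set-indexer. For $i\neq j$ one has $\{i\}\oplus\{j\}=\{i,j\}$, and two distinct edges of a simple graph are distinct unordered pairs of vertices, so their labels $\{i,j\}$ and $\{k,l\}$ cannot coincide; there is no collision. (Collisions of the kind you describe arise only when vertex labels are allowed to be arbitrary subsets, e.g.\ $\{1\}\oplus\{2\}=\{1,3\}\oplus\{2,3\}$, which is why a ``use all subsets in binary order'' labeling fails --- but singletons are safe.) Indeed the singleton labeling is the standard one-line proof of this theorem, essentially the one in \cite{A1}. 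So your proof is valid but slightly more elaborate than necessary; what the chain labeling buys instead is extra structure (edge labels are intervals disjoint in type from the vertex labels' initial segments), which is harmless but not needed for the statement.
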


Analogous to graceful labeling of graphs, the concept of set-graceful labeling and set-sequential labeling of a graph are defined in \cite{A1} as follows.

Let $G$ be a graph and let $X$ be a non-empty set. A set-indexer $f:V(G)\to \mathcal{P}(X)$  is called a {\em set-graceful labeling} of $G$ if $f^{\oplus}(E(G))=\mathcal{P}(X)-\{\emptyset\}$. A graph $G$ which admits a set-graceful labeling is called a {\em set-graceful graph}.

Let $G$ be a graph and let $X$ be a non-empty set. A set-indexer $f:V(G)\to \mathcal{P}(X)$  is called a {\em set-sequential labeling} of $G$ if $f^{\oplus}(V(G)\cup E(G))=\mathcal{P}(X)-\{\emptyset\}$. A graph $G$ which admits a set-sequential labeling is called a {\em set-sequential graph}.

\subsection{Integer Additive Set-Labeling of Graphs}

Let $A$ and $B$ be two non-empty sets. Then, their \textit{sum set}, denoted by $A+B$, is defined to be the set $A+B=\{a+b:a\in A, b\in B\}$. If $C=A+B$, then $A$ and $B$ are said to be the \textit{summands} of $C$. Using the concepts of sum sets of sets of non-negative integers, the notion of integer additive set-labeling of a given graph $G$ is introduced as follows.  

Let $\mathbb{N}_0$ be the set of all non-negative integers. An {\em integer additive set-labeling} (IASL, in short) of  graph $G$ is an injective function $f:V(G)\rightarrow \mathcal{P}(\mathbb{N}_0)$. A graph $G$ which admits an IASL is called an IASL graph. An {\em integer additive set-labeling} $f$ is an integer additive set-indexer (IASI, in short) if the induced function $f^+:E(G) \rightarrow \mathcal{P}(\mathbb{N}_0)$ defined by $f^+ (uv) = f(u)+ f(v)$ is injective.  A graph $G$ which admits an IASI is called an IASI graph.

The cardinality of the set-label of an element (vertex or edge) of a graph $G$ is called the {\em set-indexing number} of that element. An IASL (or an IASI) is said to be a $k$-uniform IASL (or $k$-uniform IASI) if $|f^+(e)|=k ~ \forall ~ e\in E(G)$. The vertex set $V(G)$ is called {\em $l$-uniformly set-indexed}, if all the vertices of $G$ have the set-indexing number $l$.

\begin{definition}{\rm
Let $G$ be a graph and let $X$ be a non-empty set. An integer additive set-indexer $f:V(G)\to \mathcal{P}(X)-\{\emptyset\}$  is called a {\em integer additive set-graceful labeling} (IASGL, in short) of $G$ if $f^{+}(E(G))=\mathcal{P}(X)-\{\emptyset,\{0\}\}$. A graph $G$ which admits an integer additive set-graceful labeling is called an {\em integer additive set-graceful graph} (in short, IASG-graph).}
\end{definition}

Motivated from the studies made in \cite{AGKS} and \cite{AH}, in this paper, we extend the concepts of set-sequential labelings of graphs to integer additive  set-sequential labelings and establish some results on them.

\section{IASSL of Graphs}

First, note that under an integer additive set-labeling, no element of a given graph can have $\emptyset$ as its set-labeling. Hence, we need to consider only non-empty subsets of $X$ for set-labeling the elements of $G$. 

Let $f$ be an integer additive set-indexer of a given graph $G$. Define a function $f^{\ast}:V(G)\cup E(G)\to \mathcal{P}(X)-\{\emptyset\}$ as follows.

\begin{equation}
f^{\ast}(x)= 
\begin{cases}
f(x) & \mbox{if} ~ x\in V(G)\\
f^+(x) & \mbox{if} ~ x\in E(G) 
\end{cases}
\label{eqn1}
\end{equation}

Clearly, $f^{\ast}[V(G)\cup E(G)]=f(V(G))\cup f^+(E(G))$. By the notation, $f^{\ast}(G)$, we mean $f^{\ast}[V(G)\cup E(G)]$. Then, $f^{\ast}$ is an extension of both $f$ and $f^+$ of $G$. Throughout our discussions in this paper, the function $f^{*}$ is as per the definition in Equation \eqref{eqn1}.

Using the definition of new induced function $f^{\ast}$ of $f$, we introduce the following notion as a sum set analogue of set-sequential graphs.

\begin{definition}{\rm
An IASI $f$ of $G$ is said to be an {\em integer additive set-sequential labeling} (IASSL) if the induced function $f^{\ast}(G)=f(V(G))\cup f^+{E(G))}=\mathcal{P}(X)-\{\emptyset\}$. A graph $G$ which admits an IASSL may be called an {\em integer additive set-sequential graph} (IASS-graph).}
\end{definition}

\noindent Hence, an integer additive set-sequential indexer can be defined as follows.

\begin{definition}\label{D-IASSG}{\rm
An integer additive set-sequential labeling $f$ of a given graph $G$ is said to be an \textit{integer additive set-sequential indexer} (IASSI) if the induced function $f^{\ast}$ is also injective. A graph $G$ which admits an IASSI may be called an {\em integer additive set-sequential indexed graph} (IASSI-graph).}
\end{definition}

A question that arouses much in this context is about the comparison between an IASGL and an IASSL of a given graph if they exist. The following theorem explains the relation between an IASGL  and an IASSL of a given graph $G$.

\begin{theorem}
Every integer additive set-graceful labeling of a graph $G$ is also an integer additive set-sequential labeling of $G$.
\end{theorem}
\begin{proof}
Let $f$ be an IASGL defined on a given graph $G$. Then, $\{0\}\in f(V(G))$ (see \cite{GS12}) and $|f^+(E(G))|=\mathcal{P}(X)-\{\emptyset,\{0\}\}$. Then, $f^{\ast}(G)$ contains all non-empty subsets of $X$. Therefore, $f$ is an IASSL of $G$.
\end{proof}

\noindent Let us now verify the injectivity of the function $f^{\ast}$ in the following proposition.

\begin{proposition}\label{P-ISSG1}{\rm
Let $G$ be a graph without isolated vertices. If the function $f^{\ast}$ is an injective, then no vertex of $G$ can have a set-label $\{0\}$.}
\end{proposition}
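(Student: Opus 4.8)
The plan is to argue by contradiction, exploiting the fact that $\{0\}$ acts as an identity element under the sum-set operation. I would begin by supposing, contrary to the claim, that some vertex $v \in V(G)$ carries the set-label $f(v) = \{0\}$. Since $G$ has no isolated vertices by hypothesis, $v$ must be incident with at least one edge; I would fix such an edge $e = uv$, where $u$ is its other endpoint. This is the only point at which the no-isolated-vertices assumption is used, and it is what guarantees that the incident edge $e$ actually exists.

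The central computation is to evaluate the induced function on this edge. By the definition of the sum set, $f^+(uv) = f(u) + f(v) = f(u) + \{0\} = \{a + 0 : a \in f(u)\} = f(u)$. Consequently, the edge label $f^{\ast}(e) = f^+(uv)$ coincides exactly with the vertex label $f^{\ast}(u) = f(u)$. The key structural fact being invoked is simply that $A + \{0\} = A$ for every set $A$ of non-negative integers.

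From here the contradiction is immediate: $u$ is a vertex and $e$ is an edge, so they are distinct elements of $V(G) \cup E(G)$, yet $f^{\ast}(u) = f^{\ast}(e)$, which violates the injectivity of $f^{\ast}$. This rules out $f(v) = \{0\}$ and establishes the proposition. I do not expect any substantial obstacle in this argument; it reduces to a one-line identity computation once the role of $\{0\}$ as the additive identity for sum sets is recognized, with the sole subtlety being the need to invoke the absence of isolated vertices to produce the offending edge.
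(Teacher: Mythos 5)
Your proof is correct and follows essentially the same argument as the paper: assume a vertex $v$ has label $\{0\}$, take a neighbour $u$, observe that $f^{\ast}(uv)=f(u)+\{0\}=f(u)=f^{\ast}(u)$, and contradict the injectivity of $f^{\ast}$. If anything, your version is slightly more careful than the paper's, which appeals to connectedness where only the absence of isolated vertices (the stated hypothesis) is needed.
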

\begin{proof}
If possible let a vertex, say $v$, has the set-label $\{0\}$. Since $G$ is connected, $v$ is adjacent to at least one vertex in $G$. Let $u$ be an adjacent vertex of $v$ in $G$ and $u$ has a set-label $A\subset X$. Then, $f^{\ast}(u)=f(u)=A$ and $f^{\ast}(uv)=f^+(uv)=A$, which is a contradiction to the hypothesis that $f^{\ast}$ is injective.
\end{proof}

\noindent In view of Observation \ref{P-ISSG1}, we notice the following points.

\begin{remark}\label{R-IASSL0a}{\rm
Suppose that the function $f^{\ast}$ defined in \eqref{eqn1} is injective. Then, if one vertex $v$ of $G$ has the set label $\{0\}$, then $v$ is an isolated vertex of $G$.}
\end{remark}

\begin{remark}\label{R-IASSL0b}{\rm
If the function $f^{\ast}$ defined in \eqref{eqn1} is injective, then no edge of $G$ can also have the set label $\{0\}$.}
\end{remark}

The following result is an immediate consequence of the addition theorem on sets in set theory and provides a relation connecting the size and order of a given IASS-graph $G$ and the cardinality of its ground set $X$. 

\begin{proposition}\label{P-IASSL2a}
Let $G$ be a graph on $n$ vertices and $m$ edges. If $f$ is an IASSL of a graph $G$ with respect to a ground set $X$, then $m+n = 2^{|X|}-(1+\kappa)$, where $\kappa$ is the number of subsets of $X$ which is the set-label of both a vertex and an edge.
\end{proposition}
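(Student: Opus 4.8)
The plan is to apply the addition theorem on sets (inclusion--exclusion for the union of two finite collections) to the two families $f(V(G))$ and $f^+(E(G))$, and then read off each cardinality from the defining properties of an IASSL. The addition theorem gives
\[
|f(V(G)) \cup f^+(E(G))| = |f(V(G))| + |f^+(E(G))| - |f(V(G)) \cap f^+(E(G))|,
\]
so the entire argument reduces to evaluating these four quantities and solving for $m+n$.

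First I would evaluate the three easy terms. Since an IASL is injective on $V(G)$, we get $|f(V(G))| = n$. Since $f$ is in fact an IASI (and an IASSL is an IASI by definition), the induced map $f^+$ is injective on $E(G)$, so $|f^+(E(G))| = m$. The defining covering condition of an IASSL, namely $f^\ast(G) = f(V(G)) \cup f^+(E(G)) = \mathcal{P}(X) - \{\emptyset\}$, forces $|f(V(G)) \cup f^+(E(G))| = 2^{|X|} - 1$. Finally, $f(V(G)) \cap f^+(E(G))$ is exactly the collection of subsets of $X$ that serve simultaneously as a vertex label and as an edge label, so by definition $|f(V(G)) \cap f^+(E(G))| = \kappa$. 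Substituting into the displayed identity yields $2^{|X|} - 1 = n + m - \kappa$, i.e.
\[
m + n = 2^{|X|} - 1 + \kappa.
\]

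The main obstacle, and the step that must be handled carefully, is the sign of the overlap term $\kappa$. The addition theorem \emph{subtracts} the intersection from the union; put combinatorially, each of the $\kappa$ shared subsets is the image of exactly one vertex (injectivity of $f$) and of exactly one distinct edge (injectivity of $f^+$), so it contributes $2$ to the element count $m+n$ but only $1$ to the number $2^{|X|}-1$ of distinct labels. The resulting surplus is precisely $\kappa$, so the overlap necessarily enters the size--order relation with a \emph{positive} sign.

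I would therefore resolve the matter by stating the conclusion as $m + n = 2^{|X|} - 1 + \kappa$; the displayed form $2^{|X|} - (1 + \kappa)$ in the proposition carries a sign typo and should read $2^{|X|} - (1 - \kappa)$, which is the same quantity. A minimal sanity check confirms the positive sign: the path $P_3$ labeled over $X = \{0,1\}$ by $f(v_1)=\{1\}$, $f(v_2)=\{0\}$, $f(v_3)=\{0,1\}$ is an IASSL with edge labels $\{1\}$ and $\{0,1\}$, so $n=3$, $m=2$ and $\kappa = 2$, giving $2^{|X|}-1+\kappa = 5 = m+n$, whereas $2^{|X|}-(1+\kappa) = 1$ fails. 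Thus the addition-theorem derivation above is the whole proof, once the overlap sign is recorded correctly.
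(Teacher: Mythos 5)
Your proposal follows exactly the paper's route: apply inclusion--exclusion (the ``addition theorem on sets'') to $f(V(G))$ and $f^{+}(E(G))$, read off $|f(V(G))|=n$ and $|f^{+}(E(G))|=m$ from the injectivity of $f$ and $f^{+}$, use the covering condition $f^{\ast}(G)=\mathcal{P}(X)-\{\emptyset\}$ to get $|f^{\ast}(G)|=2^{|X|}-1$, and identify the intersection with the $\kappa$ shared set-labels. The only divergence is the final sign, and there you are right and the paper is wrong: the paper's own displayed computation correctly reaches $2^{|X|}-1=m+n-\kappa$ but then transposes incorrectly to $m+n=2^{|X|}-1-\kappa$, whereas the valid consequence is $m+n=2^{|X|}-1+\kappa$; that is, the stated $2^{|X|}-(1+\kappa)$ should indeed read $2^{|X|}-(1-\kappa)$, as you say. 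Your $P_3$ sanity check over $X=\{0,1\}$ is a genuine IASSL (vertex labels $\{1\},\{0\},\{0,1\}$, edge labels $\{1\},\{0,1\}$, all injective, union covering $\mathcal{P}(X)-\{\emptyset\}$) and conclusively refutes the printed form, since $m+n=5$ while $2^{|X|}-(1+\kappa)=1$. Note incidentally that your correction does no collateral damage to the paper's later use of this result: the parity argument in Proposition \ref{T-IASSLG2} survives unchanged, because $2^{|X|}-1+\kappa$ and $2^{|X|}-1-\kappa$ always have the same parity.
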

\begin{proof}
Let $f$ be an IASSL defined on a given graph $G$. Then, $|f^{\ast}(G)|=|f(V(G))\cup f^+(E(G))|=|\mathcal{P}(X)-\{\emptyset\}|=2^{|X|}-1$. But by addition theorem on sets, we have
\begin{eqnarray*}
|f^{\ast}(G)|& = & |f(V(G))\cup f^+(E(G))|\\
{\rm That~is},~~ 2^{|X|}-1 & = & |f(V(G))|+|f^+(E(G))|-|f(V(G))\cap f^+(E(G))|\\
& = & |V|+|E|- \kappa \\
\implies & = & m+n-\kappa\\
\therefore m+n & = & 2^{|X|}-1-\kappa.
\end{eqnarray*}
This completes the proof.
\end{proof}

We say that two sets $A$ and $B$ are of {\em same parity} if their cardinalities are simultaneously odd or simultaneously even. Then, the following theorem is on the parity of the vertex set and edge set of $G$.  

\begin{proposition}\label{T-IASSLG2}
Let $f$ be an IASSL of a given graph $G$, with respect to a ground set $X$. Then, if $V(G)$ and $E(G)$ are of same parity, then $\kappa$ is an odd integer and if $V(G)$ and $E(G)$ are of different parity, then $\kappa$ is an even integer, where $\kappa$ is the number of subsets of $X$ which are the set-labels of both vertices and edges.   
\end{proposition}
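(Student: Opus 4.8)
The plan is to build directly on Proposition~\ref{P-IASSL2a}, which already gives the key arithmetic relation $m+n = 2^{|X|}-1-\kappa$, where $n=|V(G)|$ and $m=|E(G)|$. The entire statement is a parity assertion linking $\kappa$ to the parities of $|V(G)|$ and $|E(G)|$, so the natural approach is to read off everything modulo $2$ from that single equation. First I would rewrite the relation as $\kappa = 2^{|X|}-1-(m+n)$ and reduce it modulo $2$. Since $f$ is an IASSL, the ground set $X$ must satisfy $2^{|X|}-1 \geq m+n > 0$, which forces $|X|\geq 1$; hence $2^{|X|}$ is even and $2^{|X|}-1$ is odd. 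Therefore $\kappa \equiv 1-(m+n) \equiv 1+(m+n) \pmod 2$.

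The remaining work is purely a case analysis on the parity of $m+n$, which is exactly what ``same parity'' versus ``different parity'' of $V(G)$ and $E(G)$ encodes. If $|V(G)|$ and $|E(G)|$ are of the same parity, then $m+n=|E(G)|+|V(G)|$ is even (odd$+$odd or even$+$even), so $\kappa \equiv 1+0 \equiv 1 \pmod 2$, making $\kappa$ odd. Conversely, if $|V(G)|$ and $|E(G)|$ are of different parity, then $m+n$ is odd, so $\kappa \equiv 1+1 \equiv 0 \pmod 2$, making $\kappa$ even. This yields both implications claimed in the proposition.

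I do not anticipate a genuine obstacle here, since the result is a corollary of the counting identity already established; the only point requiring a small amount of care is justifying that $|X|\geq 1$ so that $2^{|X|}-1$ is odd. This is immediate from the existence of an IASSL: the codomain $\mathcal{P}(X)-\{\emptyset\}$ must contain enough sets to label all $n\geq 1$ vertices, so $X$ is non-empty. Once this is noted, the proof is a two-line parity computation followed by the two-case split, and I would present it precisely in that order: invoke Proposition~\ref{P-IASSL2a}, reduce modulo $2$ using that $2^{|X|}-1$ is odd, then distinguish the same-parity and different-parity cases to conclude.
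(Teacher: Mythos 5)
Your proposal is correct and follows essentially the same route as the paper: both invoke Proposition~\ref{P-IASSL2a}, use the fact that $2^{|X|}-1$ is odd, and split into the same two parity cases for $|V(G)|+|E(G)|$. The only cosmetic difference is that you phrase the argument as a single reduction modulo $2$, whereas the paper states the parity conclusion separately in each case.
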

\begin{proof}
Let $f$ be a integer additive set-sequential labeling of a given graph $G$. Then, $f^{\ast}(G)= \mathcal{P}(X)-\{\emptyset\}$. Therefore, $|f^{\ast}(G)|= 2^{|X|}-1$, which is an odd integer.

{\em Case-1:} Let $V(G)$ and $E(G)$ are of same parity. Then, $|V|+|E|$ is an even integer. Then, by Proposition \ref{P-IASSL2a}, $2^{|X|}-1-\kappa$ is an even integer, which is possible only when $\kappa$ is an odd integer.

{\em Case-2:} Let $V(G)$ and $E(G)$ are of different parity. Then, $|V|+|E|$ is an odd integer. Then, by Proposition \ref{P-IASSL2a}, $2^{|X|}-1-\kappa$ is an odd integer, which is possible only when $\kappa$ is an even integer.
\end{proof}

A relation between integer additive set-graceful labeling and an integer additive set-sequential labeling of a graph is established in the following result.

\begin{theorem}
Every integer additive set-graceful labeling of a graph $G$ is also an integer additive set-sequential labeling of $G$.
\end{theorem}
\begin{proof}
Let $f$ be an IASGL defined on a given graph $G$. Then, $\{0\}\in f(V(G))$ and $|f^+(E(G))|=\mathcal{P}(X)-\{\emptyset,\{0\}\}$. Therefore, $\{0\}\in f^{\ast}(G)$. Then, $f^{\ast}(G)$ contains all non-empty subsets of $X$. Therefore, $f$ is an IASSL of $G$.
\end{proof}

The following result determines the minimum number of vertices in a graph that admits an IASSL with respect to a finite non-empty set $X$. 

\begin{theorem}\label{T-IASSG2}
Let $X$ be a non-empty finite set of non-negative integers. Then, a graph $G$ that admits an IASSL with respect to $X$ have at least $\rho$ vertices, where $\rho$ is the number of elements in $\mathcal{P}(X)$ which are not the sum sets of any two elements of $\mathcal{P}(X)$.
\end{theorem}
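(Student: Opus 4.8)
The plan is to show that each of the $\rho$ subsets counted in the statement is forced to be a vertex-label, and then to read off the bound from the injectivity of $f$ on $V(G)$. I would begin by recalling the defining property of an IASSL, namely $f^{\ast}(G)=f(V(G))\cup f^+(E(G))=\mathcal{P}(X)-\{\emptyset\}$, so that every non-empty subset of $X$ must occur either as a vertex-label, or as an edge-label, or as both.

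The crucial step is a structural restriction on which subsets can possibly occur as edge-labels. For any edge $uv\in E(G)$ we have $f^+(uv)=f(u)+f(v)$ with $f(u),f(v)\in\mathcal{P}(X)$, so every element of $f^+(E(G))$ is, by its very construction, the sum set of two elements of $\mathcal{P}(X)$. Equivalently, if a non-empty subset $S\subseteq X$ cannot be written as $A+B$ for any $A,B\in\mathcal{P}(X)$, then $S\notin f^+(E(G))$. Let $\mathcal{N}$ denote the collection of non-empty subsets of $X$ that are not the sum set of any two elements of $\mathcal{P}(X)$, so that $|\mathcal{N}|=\rho$; I note in passing that $\emptyset=\emptyset+\emptyset$ is itself a sum set and is therefore never counted in $\rho$, which is consistent with the fact that an IASSL assigns no element the label $\emptyset$.

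I would then take an arbitrary $S\in\mathcal{N}$. By the IASSL property, $S\in f(V(G))\cup f^+(E(G))$; by the preceding paragraph, $S\notin f^+(E(G))$; hence $S\in f(V(G))$. This shows $\mathcal{N}\subseteq f(V(G))$. Finally, since $f$ is injective on $V(G)$, we obtain $|V(G)|=|f(V(G))|\ge|\mathcal{N}|=\rho$, which is exactly the asserted lower bound. The one place that needs genuine care is stating the edge-label restriction precisely enough that it matches the definition of $\rho$ (namely, that edge-labels are sum sets of two elements of $\mathcal{P}(X)$, hence the non-sum-set subsets are excluded from $f^+(E(G))$); beyond that I do not anticipate a real obstacle, as the result is essentially a counting consequence of the sum-set structure forced on edge-labels together with the injectivity of $f$.
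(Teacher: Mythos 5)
Your proposal is correct and follows essentially the same route as the paper's own proof: identify the family of subsets of $X$ that are not sum sets of two elements of $\mathcal{P}(X)$, observe that edge-labels are always sum sets so these subsets cannot appear in $f^+(E(G))$, and conclude from the IASSL property that they must all be vertex-labels. Your write-up is in fact slightly more careful than the paper's, since you explicitly invoke the injectivity of $f$ to pass from $\mathcal{N}\subseteq f(V(G))$ to the cardinality bound $|V(G)|\ge\rho$, a step the paper leaves implicit.
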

\begin{proof}
Let $f$ be an IASSL of a given graph $G$, with respect to a given ground set $X$. Let $\mathcal{A}$ be the collection of subsets of $X$ such that no element in $\mathcal{A}$ is the sum sets any two subsets of $X$. Since $f$ an IASL of $G$, all edge of $G$ must have the set-labels which are the sum sets of the set-labels of their end vertices. Hence, no element in $\mathcal{A}$ can be the set-label of any edge of $G$. But, since $f$ is an IASSL of $G$, $\mathcal{A}\subset f^{\ast}(G)=f(V(G))\cup f^+(E(G))$. Therefore, the minimum number of vertices of $G$ is equal to the number of elements in the set $\mathcal{A}$.
\end{proof}

The structural properties of graphs which admit IASSLs arouse much interests. In the example of IASS-graphs, given in Figure \ref{fig:G-IASSL1}, the graph $G$ has some pendant vertices. Hence, there arises following questions in this context. Do an IASS-graph necessarily have pendant vertices? If so, what is the number of pendant vertices required for a graph $G$ to admit an IASSL? Let us now proceed to find the solutions to these problems. 

The minimum number of pendant vertices required in a given IASS-graph is explained in the following Theorem.

\begin{theorem}\label{T-IASSL4}
Let $G$ admits an IASSL with respect to a ground set $X$ and let $\mathcal{B}$ be the collection of subsets of $X$ which are neither the sum sets of any two subsets of $X$ nor their sum sets are subsets of $X$. If $\mathcal{B}$ is non-empty, then
\begin{enumerate}[itemsep=0mm]
\item $\{0\}$ is the set-label of a vertex in $G$
\item the minimum number pendant vertices in $G$ is cardinality of $\mathcal{B}$.
\end{enumerate}

\begin{remark}
Since the ground set $X$ of an IASS-graph must contain the element $0$, every subset $A_i$ of $X$ sum set of $\{0\}$ and $A_i$ itself. In this sense, each subset $A_i$ may be considered as a \textit{trivial sum set} of two subsets of $X$.  
\end{remark}

In the following discussions, by a sum set of subsets of $X$, we mean the non-trivial sum sets of subsets of $X$.
 
\end{theorem}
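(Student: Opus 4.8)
The plan is to show that each element of $\mathcal{B}$ is forced to label a pendant vertex whose unique neighbour is the vertex carrying the label $\{0\}$; both parts of the theorem then drop out simultaneously.

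First I would check that every $A\in\mathcal{B}$ must occur as a vertex label. Since $f$ is an IASSL we have $A\in f^{\ast}(G)=f(V(G))\cup f^+(E(G))$. If $A$ were an edge label, then $A=f(u)+f(v)$ for some edge $uv$; but the first defining property of $\mathcal{B}$ forbids $A$ from being a non-trivial sum set, so this decomposition must be the trivial one, forcing (say) $f(u)=\{0\}$ and hence $A=f(v)$. In either case $A\in f(V(G))$, so I may fix a vertex $v$ with $f(v)=A$.

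Next I would pin down the neighbours of $v$. Because $f$ is an IASSL, every edge label lies in $\mathcal{P}(X)-\{\emptyset\}$, so for any neighbour $u$ of $v$ the edge label $f(u)+f(v)=f(u)+A$ is a subset of $X$. The second defining property of $\mathcal{B}$ says that no set $A+C$ is a subset of $X$ except in the trivial case $C=\{0\}$; hence $f(u)=\{0\}$ for every neighbour $u$ of $v$. Part (1) is now immediate: $\mathcal{B}$ is non-empty and $G$ has no isolated vertices, so the vertex $v$ above has at least one neighbour, and that neighbour carries the label $\{0\}$.

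For Part (2) I would invoke injectivity of $f$: there is exactly one vertex $w$ with $f(w)=\{0\}$. The previous step shows that every vertex labelled by an element of $\mathcal{B}$ is adjacent only to $w$, and since $G$ is simple such a vertex then has degree exactly one, i.e. is a pendant vertex. Distinct elements of $\mathcal{B}$ yield distinct vertices by injectivity of $f$, so $G$ has at least $|\mathcal{B}|$ pendant vertices, which is the asserted lower bound.

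The only place that genuinely needs care is the second step, where I use the hypothesis $f^{\ast}(G)=\mathcal{P}(X)-\{\emptyset\}$ to force every edge label to be a subset of $X$; this is exactly what collapses all possible neighbours of a vertex labelled by an element of $\mathcal{B}$ down to the single vertex $w$. The remaining points are routine bookkeeping with the trivial-versus-non-trivial sum-set convention fixed in the remark (so that $C=\{0\}$ is the only admissible neighbour label) together with the injectivity of $f$ guaranteeing uniqueness of the $\{0\}$-labelled vertex.
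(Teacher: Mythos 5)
Your proof is correct and follows essentially the same route as the paper's: each element of $\mathcal{B}$ is forced to be a vertex label, its only admissible neighbour label is $\{0\}$, and since the graph has no isolated vertices this yields both the $\{0\}$-labelled vertex and the $|\mathcal{B}|$ pendant vertices. In fact you fill in two steps the paper merely asserts --- why an element of $\mathcal{B}$ cannot arise as an edge label (handling the trivial decomposition $A=\{0\}+A$), and why the second defining property of $\mathcal{B}$ collapses every neighbour label to $\{0\}$ --- so your write-up is, if anything, the more complete version of the same argument.
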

\begin{proof}
Let $f$ be an IASSL of $G$ with respect to a ground set $X$. Also, let $\mathcal{B}$ be the collection of subsets of $X$ which are neither the sum sets of any two subsets of $X$ nor their sum sets are subsets of $X$. Let $A\subset X$ be an element of $\mathcal{B}$. then $A$ must be the set-label of a vertex of $G$. Since $A \in \mathcal{B}$, the only set that can be adjacent to $A$ is $\{0\}$. Therefore, since $G$ is a connected graph, $\{0\}$ must be the set-label of a vertex of $G$.
More over, since $A$ is an arbitrary vertex in $\mathcal{B}$, the minimum number of pendant vertices in $G$ is $|\mathcal{B}|$.  
\end{proof}

The following result thus establishes the existence of pendant vertices in an IASS-graph.

\begin{theorem}\label{T-IASSL3a}
Every graph that admits an IASSL, with respect to a non-empty finite ground set $X$, have at least one pendant vertex.
\end{theorem}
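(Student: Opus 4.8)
The plan is to deduce this from the immediately preceding Theorem~\ref{T-IASSL4}. That theorem guarantees at least $|\mathcal{B}|$ pendant vertices whenever the family $\mathcal{B}$ (the subsets of $X$ that are neither non-trivial sum sets of two subsets of $X$ nor possess a non-trivial sum set contained in $X$) is non-empty. Hence the entire task reduces to exhibiting a single member of $\mathcal{B}$; once $\mathcal{B}\neq\emptyset$ is known, the conclusion that $G$ has $|\mathcal{B}|\ge 1$ pendant vertices follows at once.

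Before constructing such a set I would first record two structural facts about any IASS-graph $G$ on the ground set $X$: that $0\in X$ and that $|X|\ge 2$. For the former, suppose to the contrary that $0\notin X$, so that $x_1:=\min X\ge 1$. Since $f$ is an IASSL, the full set $X$ itself must occur in $f^{\ast}(G)$. It cannot be an edge label, because every sum set $f(u)+f(v)$ has minimum element $\min f(u)+\min f(v)\ge 2x_1>x_1=\min X$ and so cannot contain $x_1$; thus $X$ must be a vertex label. But $G$ has no isolated vertices, so this vertex has a neighbour $v$, and the corresponding edge label $X+f(v)$ has maximum element $\max X+\max f(v)>\max X$, contradicting $f^{\ast}(G)=\mathcal{P}(X)-\{\emptyset\}$. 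Hence $0\in X$. Finally, an edge of $G$ forces two distinct non-empty vertex labels, so $2^{|X|}-1\ge 2$, giving $|X|\ge 2$ and in particular $M:=\max X\ge 1$.

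With these in hand I would take the candidate set $A=\{0,M\}$ and verify $A\in\mathcal{B}$. For the first condition, any representation $A=P+Q$ with $P,Q\subseteq X$ forces $0\in P$ and $0\in Q$ (as $0$ is then the sum-set minimum), whence $P,Q\subseteq A=\{0,M\}$; the only choice avoiding the trivial factor $\{0\}$ is $P=Q=\{0,M\}$, which yields $\{0,M,2M\}\neq A$. So $A$ is not a non-trivial sum set. For the second condition, for every $B\subseteq X$ with $B\neq\{0\}$ we have $\max(A+B)=M+\max B>M=\max X$, so $A+B\nsubseteq X$. Thus $A=\{0,M\}\in\mathcal{B}$, $\mathcal{B}$ is non-empty, and Theorem~\ref{T-IASSL4} delivers the required pendant vertex.

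The hard part is not the final computation but getting the bookkeeping of the ``non-trivial sum set'' convention exactly right: one must ensure that the only admissible decompositions of $\{0,M\}$ are ruled out after discarding the trivial $\{0\}$-summand, and that the maximum-element obstruction in the second condition is applied only to genuinely non-trivial summands $B\neq\{0\}$. Establishing $0\in X$ cleanly is the other load-bearing step, since the whole choice $A=\{0,M\}$ depends on it; I would also want to double-check that no degenerate small case (e.g.\ $|X|=1$) slips past the ``no isolated vertices'' hypothesis.
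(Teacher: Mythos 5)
Your proof is correct and takes essentially the same route as the paper: reduce the statement to Theorem~\ref{T-IASSL4} by exhibiting a single member of $\mathcal{B}$, taken to be the two-element set formed by the extremal elements of $X$. The paper's witness $\{x_1,x_l\}$ (minimum and maximum of $X$) coincides with your $\{0,M\}$ once $0\in X$ is known; your write-up is in fact tighter than the paper's, which merely asserts that this set is neither a sum set nor a summand and nowhere proves $0\in X$, whereas you verify both claims, including the non-trivial-summand bookkeeping.
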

\begin{proof}
Let the graph $G$ admits an IASSL $f$ with respect to a ground set $X$. Let $\mathcal{B}$ be the collection of subsets of $X$ which are neither the sum sets of any two subsets of $X$ nor their sum sets are subsets of $X$. 

We claim that $\mathcal{B}$ is non-empty, which can be proved as follows. Since $X$ is a finite set of non-negative integers, $X$ has a smallest element, say $x_1$, and a greatest element $x_l$. Then, the subset $\{x_1,x_l\}$ belongs to $f^{\ast}(G)$. Since it is not the sum set any sets and is not a summand of any set in $\mathcal{P}(X)$, $\{x_1,x_l\} \in \mathcal{B}$. Therefore, $\mathcal{B}$ is non-empty.

Since $\mathcal{B}$ is non-empty, by Theorem \ref{T-IASSL3a}, $G$ has some pendant vertices.
\end{proof}

\begin{remark}
{\rm In view of the above results, we can make the following observations.
\begin{enumerate}[itemsep=0mm]
\item No cycle $C_n$ can have an IASSL.
\item For $n\ge 2$, no complete graph $K_n$ admits an IASSL.
\item No complete bipartite graph $K_{m,n}$ admits an IASL.
\end{enumerate}
} %
\end{remark}

The following result establish the existence of a graph that admits an IASSL with respect to a given ground set $X$.

\begin{theorem}\label{IASSL5a}
For any non-empty finite set $X$ of non-negative integers containing $0$, there exists a graph $G$ which admits an IASSL with respect to $X$. 
\end{theorem}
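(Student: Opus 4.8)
The plan is to prove existence by an explicit construction, exploiting the hypothesis that $0\in X$. By Theorem \ref{T-IASSL4} any IASS-graph must carry the label $\{0\}$ on one of its vertices, so it is natural to build $G$ around a single vertex labelled $\{0\}$. Concretely, I would take $G$ to be the star $K_{1,\,2^{|X|}-2}$ (assuming $|X|\ge 2$, the case $|X|=1$ being degenerate): designate one central vertex and attach $2^{|X|}-2$ pendant vertices. Assign the centre the set-label $\{0\}$ and assign the pendant vertices, bijectively, the remaining $2^{|X|}-2$ members of $\mathcal{P}(X)-\{\emptyset\}$, that is, every non-empty subset of $X$ other than $\{0\}$.

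Next I would check that this $f$ is an IASI. Injectivity of $f$ on $V(G)$ is immediate, since distinct vertices receive distinct subsets by construction. For the edges, each edge joins the centre to some pendant vertex labelled $A$, and its induced label is
\[
f^{+}(uv)=f(u)+f(v)=\{0\}+A=\{\,0+a : a\in A\,\}=A .
\]
Because $0$ is the additive identity, this sum set is exactly $A$; in particular it is a non-empty subset of $X$, so no edge-label escapes $\mathcal{P}(X)$, and distinct pendant vertices produce distinct edge-labels. Hence $f^{+}$ is injective on $E(G)$ and $f$ is an IASI.

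Finally I would verify the set-sequential condition. By construction the vertex labels already run over \emph{every} non-empty subset of $X$, i.e.\ $f(V(G))=\mathcal{P}(X)-\{\emptyset\}$, so
\[
f^{\ast}(G)=f(V(G))\cup f^{+}(E(G))=\mathcal{P}(X)-\{\emptyset\},
\]
and therefore $f$ is an IASSL of $G$. The one place where care is genuinely needed — and essentially the only obstacle — is ensuring that the induced edge-labels remain non-empty subsets of $X$ while staying injective; this is precisely where the hypothesis $0\in X$ is used, since the hub $\{0\}$ forces $\{0\}+A=A$ and keeps every sum set inside $X$. The construction is of course extravagant, using all $2^{|X|}-1$ subsets as vertex labels, far more than the minimum guaranteed by Theorem \ref{T-IASSG2}, and it is consistent with Theorems \ref{T-IASSL4} and \ref{T-IASSL3a} since the hub carries $\{0\}$ and every leaf is pendant; for a pure existence statement, however, no economy is required.
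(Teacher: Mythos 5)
Your construction is correct, but it is genuinely different from the paper's. The paper labels the vertices only with the sets in $\mathcal{A}$ (the subsets of $X$ that are not sum sets of two subsets of $X$), starting from a hub labelled $\{0\}$, and then realizes every remaining subset $A_r'\in\mathcal{A}'$ as an \emph{edge} label by inserting an edge $v_iv_j$ whenever $A_r'=A_i+A_j$ with $A_i,A_j\in\mathcal{A}$; thus vertex labels and edge labels are essentially disjoint families that together exhaust $\mathcal{P}(X)-\{\emptyset\}$. You instead put \emph{all} $2^{|X|}-1$ non-empty subsets on the vertices of a star with hub $\{0\}$ and let every edge label merely duplicate its pendant endpoint's label via $\{0\}+A=A$. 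Your route buys simplicity and rigor: the only things to verify are two trivial injectivity statements, and you avoid the paper's implicit (and not fully argued) claim that every set in $\mathcal{A}'$ decomposes as a sum of two members of $\mathcal{A}$. What it gives up is economy and structure: your graph uses the maximum conceivable number of vertices rather than roughly the minimum $\rho$ of Theorem \ref{T-IASSG2}, and it makes the overlap $\kappa$ between vertex labels and edge labels as large as possible, so the resulting labeling is as far as possible from the injective induced function $f^{\ast}$ needed for an IASSI; the paper's construction, by keeping the two label families nearly disjoint, is the one that generalizes to the later existence result for IASSI-graphs. Both arguments share the same degenerate issue when $|X|=1$ (the only candidate graph is an isolated vertex), which you at least flag explicitly.
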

\begin{proof}
Let $X$ be a given non-empty finite set containing the element $0$ and let $\mathcal{A}=\{A_i\}$, be the collection of subsets of $X$ which are not the sum sets of any two subsets of $X$. Then, the set $\mathcal{A}'=\mathcal{P}(X)-\mathcal{A}\cup \{\emptyset\}$ is the set of all subsets of $X$ which are the sum sets of any two subsets of $X$ and hence the sum sets of two elements in $\mathcal{A}$. 

What We need is to construct a graph which admits an IASSL with respect to $X$. For this, begin with a vertex $v_1$. Label the vertex $v_1$ by the set $A_1=\{0\}$. For $1\le i \le |\mathcal{A_i}|$, create a new vertex $v_i$ corresponding to each element in $\mathcal{A}$ and label $v_i$ by the set $A_i\in \mathcal{A}$. Then, connect each of these vertices to $V_1$ as these vertices $v_i$ can be adjacent only to the vertex $v_1$. Now that all elements in $\mathcal{A}$ are the set-labels of vertices of $G$, it remains the elements of $\mathcal{A}'$  for labeling the elements of $G$. For any $A_r' \in \mathcal{A}'$, we have $A_r' = A_i+A_j$, where $A_i,A_j \in \mathcal{A}$. Then, draw an edge $e_r$ between $v_i$ and $v_j$ so that $e_r$ has the set-label $A_r'$. This process can be repeated until all the elements in $\mathcal{A}'$ are also used for labeling the elements of $G$. Then, the resultant graph is an IASS-graph with respect to the ground set $X$.
\end{proof}

\noindent Figure \ref{fig:G-IASSL1} illustrates the existence of an IASSL for a given graph $G$.

\begin{figure}[h!]
\centering
\includegraphics[width=0.7\linewidth]{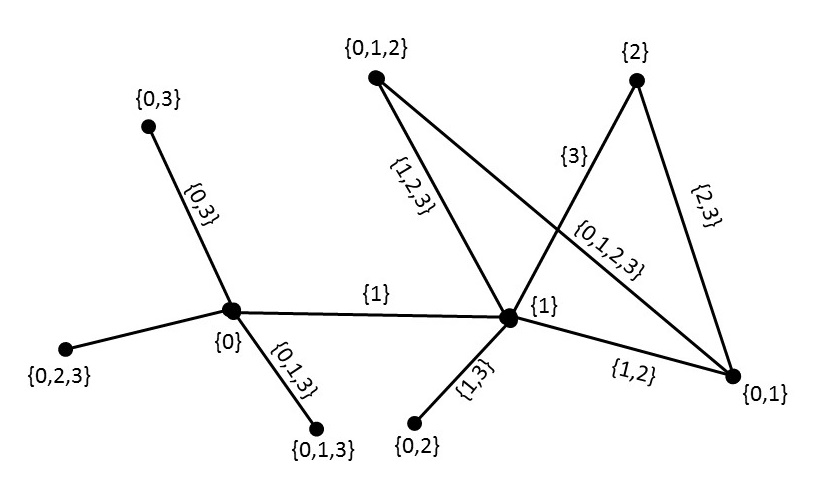}
\caption{}
\label{fig:G-IASSL1}
\end{figure}

On the other hand, for a given graph $G$, the choice of a ground set $X$ is also very important to have an integer additive set-sequential labeling. There are certain other restrictions in assigning set-labels to the elements of $G$. We explore the properties of a graph $G$ that admits an IASSL with respect to a given ground set $X$. As a result, we have the following observations. 

\begin{proposition}\label{P-ISSL3a}
Let $G$ be a connected integer additive set-sequential graph with respect to a ground set $X$. Let $x_1$ and $x_2$ be the two minimal non-zero elements of $X$. Then, no edges of $G$ can have the set-labels $\{x_1\}$ and $\{x_2\}$.
\end{proposition}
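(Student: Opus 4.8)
The plan is to argue by contradiction and to exploit the elementary fact that a singleton can occur as a sum set only when both summands are singletons. Suppose some edge $e=uv$ carries the set-label $\{x_i\}$ for $i\in\{1,2\}$, so that $f^+(uv)=f(u)+f(v)=\{x_i\}$. Since $|A+B|\ge |A|+|B|-1$ for any non-empty sets $A,B$, a singleton value of $f^+$ forces $|f(u)|=|f(v)|=1$. Hence $f(u)=\{a\}$ and $f(v)=\{b\}$ with $a,b\in X$ and $a+b=x_i$, and the whole problem reduces to analysing the additive decompositions of $x_1$ and $x_2$ inside $X$.

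Next I would pin down the summands using minimality together with the injectivity of $f$ on $V(G)$. Recall $0\in X$ and that $x_1<x_2$ are the two least non-zero elements, so every other non-zero element of $X$ exceeds $x_2$. For $i=1$: if both $a,b$ were non-zero then $a,b\ge x_1$, giving $a+b\ge 2x_1>x_1$, a contradiction; hence one summand is $0$ and $\{f(u),f(v)\}=\{\{0\},\{x_1\}\}$. For $i=2$: if both $a,b$ were non-zero, each would be a non-zero element of $X$ strictly below $x_2$ (its partner being positive), and the \emph{only} such element is $x_1$; thus $a=b=x_1$ and $x_2=2x_1$. But $a=b=x_1$ would require two distinct vertices both labelled $\{x_1\}$, which is impossible since $f$ is injective. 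Therefore again one summand is $0$ and $\{f(u),f(v)\}=\{\{0\},\{x_2\}\}$. In both cases one endpoint is labelled $\{0\}$ and the other is labelled $\{x_i\}$.

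The decisive step is then that the edge $uv$ and its own endpoint bear the \emph{same} label: $f^{\ast}(uv)=f^+(uv)=\{0\}+\{x_i\}=\{x_i\}=f(v)=f^{\ast}(v)$. This is precisely the configuration excluded by the mechanism of Proposition~\ref{P-ISSG1} — a vertex labelled $\{0\}$ adjacent to a vertex whose label is reproduced on the joining edge — and it violates the distinctness demanded of the induced values $f^{\ast}$, which is the contradiction that closes the argument.

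I expect two points to need the most care. The genuinely technical hurdle is the $\{x_2\}$ case, where the equation $x_2=a+b$ is not a priori unique; the proof must combine the two-smallest-elements hypothesis (to force any non-zero summand below $x_2$ to equal $x_1$) with the injectivity of $f$ on $V(G)$ (to kill the degenerate $\{x_1\}+\{x_1\}$). The subtler point, and the true obstacle, is the final contradiction: the duplication of $\{x_i\}$ on both $v$ and $uv$ is harmless for a bare IASSL, since $f^{\ast}$ need not be injective there, so the step really relies on the distinctness of $f^{\ast}$ as in Proposition~\ref{P-ISSG1}. I would therefore carry out the last step in the indexer setting where $f^{\ast}$ is injective, and I would state explicitly which distinctness property is being invoked, rather than leaving the contradiction implicit.
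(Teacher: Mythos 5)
Your sum-set decomposition is correct, and it is in fact tighter than the paper's own argument. The paper disposes of the proposition in one line: an edge label must be a (non-trivial) sum set of two subsets of $X$, and $\{x_1\}$, $\{x_2\}$ are not such sum sets. This skips both of the points you isolate. First, it never addresses the case $x_2=2x_1$, in which $\{x_2\}=\{x_1\}+\{x_1\}$ genuinely is a sum set of two subsets of $X$ and is excluded only by the injectivity of $f$ on the vertices, exactly as you argue. Second, and more seriously, the premise that an edge label must be a non-trivial sum set fails for any edge incident to a vertex labelled $\{0\}$: such an edge carries the trivial sum set $\{0\}+A=A$. The paper itself shows (Theorem \ref{T-IASSL4}, Proposition \ref{P-ISSL3b}) that every connected IASS-graph has a vertex labelled $\{0\}$ with pendant neighbours, so this is not a vacuous loophole.

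The obstacle you flag at the end is therefore not a matter of extra care; it is fatal to the statement as written. For a bare IASSL the coincidence $f^{\ast}(uv)=f^{\ast}(v)$ is permitted --- it is precisely what the parameter $\kappa$ of Proposition \ref{P-IASSL2a} counts --- and the proposition is false at that level of generality. Concretely, let $X=\{0,1,2\}$ and let $G$ be the connected graph with six vertices labelled $\{0\},\{1\},\{2\},\{0,1\},\{0,2\},\{0,1,2\}$, with edges joining the $\{0\}$-vertex to the vertices labelled $\{1\},\{2\},\{0,2\},\{0,1,2\}$, and one further edge joining $\{1\}$ to $\{0,1\}$. Both $f$ and $f^{+}$ are injective, the edge labels are $\{1\},\{2\},\{0,2\},\{0,1,2\},\{1,2\}$, and $f(V(G))\cup f^{+}(E(G))=\mathcal{P}(X)-\{\emptyset\}$, so $G$ is a connected IASS-graph; yet its edges carry both forbidden labels $\{x_1\}=\{1\}$ and $\{x_2\}=\{2\}$. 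Your decision to run the final step in the indexer setting, where $f^{\ast}$ is injective and Proposition \ref{P-ISSG1} forbids the $\{0\}$-vertex configuration, is thus the only way to obtain a true statement: your proof is complete there, while the paper's proof (and the proposition as stated) does not survive the trivial-sum-set loophole.
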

\begin{proof}
In any IASL-graph $G$, the set-label of an edge is the sum set of the set-labels of its end vertices. Therefore, a subset $A$ of the ground set $X$, that is not a sum set of any two subsets of $X$, can not be the set-label of any edge of $G$. Since $x_1$ and $x_2$ are the minimal non-zero elements of $X$, $\{x_1\}$ and $\{x_2\}$ can not be the set-labels of any edge of $G$. 
\end{proof}

\begin{proposition}\label{P-ISSL3b}
Let $G$ be a connected integer additive set-sequential graph with respect to a ground set $X$. Then, any subset $A$ of $X$ that contains the maximal element of $X$ can be the set-label of a vertex $v$ of $G$ if and only if $v$ is a pendant vertex that is adjacent to the vertex $u$ having the set-label $\{0\}$.
\end{proposition}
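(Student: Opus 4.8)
The plan is to exploit the maximality of the greatest element of $X$ together with the defining feature of an IASSL, namely that every edge label $f^+(uv)=f(u)+f(v)$ is forced to be a non-empty subset of $X$ (since $f^{\ast}(G)=\mathcal{P}(X)-\{\emptyset\}$). Write $x_l$ for the maximal element of $X$, and let $A\subseteq X$ with $x_l\in A$, so that $\max A=x_l$. The whole argument turns on comparing the largest integer occurring in a candidate edge label with $x_l$.

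For the necessity direction, suppose $f(v)=A$ and let $w$ be any neighbour of $v$. The edge $vw$ carries the label $A+f(w)$, whose greatest element is $\max A+\max f(w)=x_l+\max f(w)$. Because $A+f(w)\subseteq X$, this greatest element cannot exceed $x_l$, which forces $\max f(w)=0$; as $f(w)$ is a non-empty set of non-negative integers, this gives $f(w)=\{0\}$. Since $f$ is injective, at most one vertex of $G$ carries the label $\{0\}$, so every neighbour of $v$ must be this single vertex $u$. As $G$ is connected (with more than one vertex), $v$ has at least one neighbour; hence $v$ has exactly one neighbour, namely $u$ with $f(u)=\{0\}$, i.e. $v$ is a pendant vertex adjacent to $u$.

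For the sufficiency direction, suppose $v$ is a pendant vertex whose unique neighbour $u$ satisfies $f(u)=\{0\}$. Then the only edge incident with $v$ is $vu$, and its label is $f(v)+\{0\}=f(v)$. Choosing $f(v)=A$ with $x_l\in A$ makes this edge label equal to $A\subseteq X$, a legitimate element of $\mathcal{P}(X)-\{\emptyset\}$, so no edge at $v$ can violate the requirement that edge labels be subsets of $X$. Thus such a label $A$ can indeed be assigned to $v$ without contradicting the IASL conditions, which establishes the converse and completes the equivalence.

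I expect the crux, and the only place where genuine care is needed, to be the necessity direction, specifically the justification that $A+f(w)$ must be a subset of $X$. This is not automatic for an arbitrary IASL; it is a consequence of the IASSL hypothesis $f^{\ast}(G)=\mathcal{P}(X)-\{\emptyset\}$, which confines all edge labels to $\mathcal{P}(X)$. Once this containment is secured, the maximality comparison and the injectivity forcing uniqueness of the label $\{0\}$ finish the argument, and the sufficiency direction reduces to a routine consistency check.
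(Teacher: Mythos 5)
Your proof is correct and rests on the same key idea as the paper's: since every edge label of an IASS-graph must be a non-empty subset of $X$, the maximal element $x_l$ of $X$ in $f(v)$ forces $\max f(w)=0$, hence $f(w)=\{0\}$, for every neighbour $w$ of $v$. In fact your write-up is more complete than the paper's, which stops after showing that any neighbour of $v$ must carry the label $\{0\}$ and leaves both the pendancy conclusion (which you obtain from injectivity of $f$) and the converse direction (your consistency check for the edge label $A+\{0\}=A$) unaddressed.
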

\begin{proof}
Let $x_n$ be the maximal element in $X$ and let $A$ be a subset of $X$ that contains the element $x_n$. If possible, let $A$ be the set-label of a vertex , say $v$, in $G$. Since $G$ is a connected graph, there exists at least one vertex in $G$ that is adjacent to $v$. Let $u$ be an adjacent vertex of $v$ in $G$ and let $B$ be its set-label. Then, the edge $uv$ has the set-label $A+B$. If $B \neq \{0\}$, then there exists at least one element $x_i\neq 0$ in $B$ and hence $x_i+x_n \not \in X$ and hence not in $A+B$, which is a contradiction to the fact that $G$ is an IASS-graph.
\end{proof}

Let us now discuss whether trees admit integer additive set-sequential labeling, with respect to a given ground set $X$.

\begin{theorem}
A tree $G$ admits an IASSL $f$ with respect to a finite ground set $X$, then $G$ has $2^{|X|-1}$ vertices.
\end{theorem}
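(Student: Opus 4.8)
The plan is to reduce the statement to a vertex–edge counting identity and then to pin down the overlap term $\kappa$. Since $G$ is a tree on $n$ vertices it has exactly $m=n-1$ edges, so the entire task is to feed this one structural fact into the cardinality count already established for set-sequential labelings and to control the number of subsets that are reused as both vertex and edge labels.

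First I would invoke Proposition~\ref{P-IASSL2a}, which gives $m+n=2^{|X|}-1-\kappa$, where $\kappa=|f(V(G))\cap f^{+}(E(G))|$ is the number of subsets of $X$ that serve simultaneously as a vertex label and an edge label. Substituting $m=n-1$ turns this into $2n-1=2^{|X|}-1-\kappa$, hence
\[
n=2^{|X|-1}-\frac{\kappa}{2}.
\]
Before addressing $\kappa$ itself I would record a consistency check: for a tree $|V(G)|=n$ and $|E(G)|=n-1$ always have opposite parity, so Proposition~\ref{T-IASSLG2} forces $\kappa$ to be even, and therefore $\kappa/2$ is a genuine integer, as the displayed formula requires.

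It then remains to prove $\kappa=0$, i.e.\ that $f(V(G))$ and $f^{+}(E(G))$ are disjoint, and this is the step I expect to be the real obstacle. The target equality $n=2^{|X|-1}$ is \emph{exactly} equivalent to this disjointness, so no further manipulation of the count will help; one genuinely has to show that in a tree no subset of $X$ is realized both as a vertex label and as the sum set of two adjacent vertex labels. I would attempt this by exploiting the extreme elements of $X$: Proposition~\ref{P-ISSL3b} confines every subset containing the maximal element $x_n$ to a pendant vertex adjacent to the vertex labelled $\{0\}$, while subsets such as $\{x_1,x_n\}$ are forced to be vertex labels and can never arise as edge labels, and I would try to leverage acyclicity to propagate these restrictions across all of $\mathcal{P}(X)\setminus\{\emptyset\}$. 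The delicate point, and where I would be most cautious, is that the bare definition of an IASSL only requires $f^{\ast}(G)=\mathcal{P}(X)\setminus\{\emptyset\}$ and does nothing on its own to forbid a subset from being both a vertex label and an edge label; so establishing $\kappa=0$ must draw essentially on the tree structure of $G$, and it is conceivable that the disjointness fails unless one additionally assumes that $f$ is injective as an IASSI, in which case the conclusion would become immediate from the displayed count.
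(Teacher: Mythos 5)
Your counting argument is precisely the paper's own route: substitute $m=n-1$ into the identity of Proposition~\ref{P-IASSL2a} and conclude, \emph{provided} $\kappa=0$. The obstacle you flag is genuine, and the paper never overcomes it either: its proof silently replaces the hypothesis ``IASSL'' by ``IASSI'' (and cites Theorem~\ref{T-IASSG2}, which concerns minimum vertex counts, when what is actually being used is Proposition~\ref{P-IASSL2a} with $\kappa=0$). Under the IASSI reading, injectivity of $f^{\ast}$ forces $f(V(G))\cap f^{+}(E(G))=\emptyset$, and the computation closes instantly --- exactly your final remark. So your proposal, as far as it goes, is no weaker than the paper's argument; neither establishes $\kappa=0$ from the IASSL hypothesis alone. (A side note: the displayed derivation inside Proposition~\ref{P-IASSL2a} actually yields $m+n=2^{|X|}-1+\kappa$, not $-\kappa$ as that proposition states; you inherited the stated sign, but either way the conclusion $n=2^{|X|-1}$ is equivalent to $\kappa=0$.)

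Your caution is more than justified: $\kappa=0$ is not merely unprovable here, it is impossible, so the theorem as literally stated is false. In any connected IASS-graph some vertex $u$ must carry the label $\{0\}$ (by the paper's own Theorem~\ref{T-IASSL4} and Proposition~\ref{P-ISSL3b}), and every edge $uv$ incident to it satisfies $f^{+}(uv)=\{0\}+f(v)=f(v)$, so $f(v)$ is simultaneously a vertex label and an edge label, giving $\kappa\ge 1$; for a tree, Proposition~\ref{T-IASSLG2} then forces $\kappa$ to be even, so $\kappa\ge 2$. A concrete counterexample: take $X=\{0,1,2\}$ and the tree on six vertices labelled $\{0\},\{1\},\{2\},\{0,1\},\{0,2\},\{0,1,2\}$, with edges joining $\{0\}$ to each of $\{1\},\{2\},\{0,2\},\{0,1,2\}$, and one further edge joining $\{1\}$ to $\{0,1\}$. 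The edge labels are $\{1\},\{2\},\{0,2\},\{0,1,2\},\{1,2\}$, so $f^{\ast}(G)=\mathcal{P}(X)-\{\emptyset\}$ and this is an IASSL of a tree with $n=6\neq 2^{|X|-1}=4$ (here $\kappa=4$, matching $n=2^{|X|-1}+\kappa/2$). Hence the statement holds only under the IASSI hypothesis, where it follows immediately from your count --- but it is then vacuous, since the paper later proves that no connected graph admits an IASSI. What your argument genuinely proves for IASSLs is the corrected identity $n=2^{|X|-1}+\kappa/2$, nothing stronger.
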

\begin{proof}
Let $G$ be a tree on $n$ vertices. If possible, let $G$ admits an IASSI. Then, $|E(G)|=n-1$. Therefore, $|V(G)|+|E(G)|=n+n-1=2n-1$. But, by Theorem \ref{T-IASSG2}, $2^{|X|}-1=2n-1 \implies n=2^{|X|-1}$. 
\end{proof}

\noindent Invoking the above results, we arrive at the following conclusion.

\begin{theorem}
No connected graph $G$ admits an integer additive set-sequential indexer.
\end{theorem}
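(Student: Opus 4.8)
The plan is to derive a contradiction from the two opposing demands that the definition of an IASSI places on the singleton set-label $\{0\}$: every IASSL forces $\{0\}$ to occur as a vertex label, whereas injectivity of $f^{\ast}$ forbids it. So I would argue by contradiction. Suppose a connected graph $G$ admits an IASSI $f$ with respect to some finite ground set $X$. By Definition \ref{D-IASSG}, this means $f$ is an IASSL of $G$ and, in addition, the induced function $f^{\ast}$ of \eqref{eqn1} is injective.

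Next I would invoke the pendant-vertex machinery to locate $\{0\}$ among the vertex labels. Since $f$ is an IASSL of $G$ with respect to the finite set $X$, the collection $\mathcal{B}$ of subsets of $X$ that are neither sum sets of two subsets of $X$ nor summands of a subset of $X$ is non-empty; indeed, the proof of Theorem \ref{T-IASSL3a} exhibits the explicit element $\{x_1,x_l\}\in\mathcal{B}$ built from the smallest and largest elements of $X$. With $\mathcal{B}\neq\emptyset$ in hand, part (1) of Theorem \ref{T-IASSL4} applies and guarantees that $\{0\}$ must be the set-label of some vertex of $G$.

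Finally I would play this against Proposition \ref{P-ISSG1}. A connected graph here has at least two vertices and hence no isolated vertex (all graphs in the paper are assumed isolated-vertex-free), so the hypothesis of the proposition is met; since $f^{\ast}$ is injective, Proposition \ref{P-ISSG1} forces the conclusion that \emph{no} vertex of $G$ can carry the set-label $\{0\}$. This directly contradicts the conclusion of the preceding step, and the contradiction completes the proof.

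The only delicate point — and really the crux of the argument — is verifying that the hypotheses of the two cited results are genuinely satisfied simultaneously: that $G$ is connected and isolated-vertex-free so that Proposition \ref{P-ISSG1} is available, and that $\mathcal{B}$ is non-empty so that Theorem \ref{T-IASSL4} pins $\{0\}$ into the vertex labels. Both conditions follow at once from the standing assumptions and the earlier lemmas, so once they are checked the contradiction is immediate and no computation is required.
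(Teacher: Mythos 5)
Your proof is correct and follows the same overall strategy as the paper's: a contradiction pivoting on the label $\{0\}$, with Proposition~\ref{P-ISSG1} supplying the half that injectivity of $f^{\ast}$ forbids $\{0\}$ on any vertex of a connected (hence isolated-vertex-free) graph. The two proofs differ only in which earlier result is used to force $\{0\}$ into the vertex labels. The paper cites Propositions~\ref{P-ISSL3a} and~\ref{P-ISSL3b}: a subset containing the maximal element of $X$ that is not a sum set cannot label an edge, so it must label a vertex, and by Proposition~\ref{P-ISSL3b} that vertex is pendant and adjacent to a vertex labeled $\{0\}$; note that the paper leaves the bridging step (that such a subset really does occur as a vertex label) implicit. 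You instead invoke part (1) of Theorem~\ref{T-IASSL4}, after explicitly checking its hypothesis $\mathcal{B}\neq\emptyset$ via the element $\{x_1,x_l\}$ exhibited in the proof of Theorem~\ref{T-IASSL3a}. Your route is in fact the one the paper itself follows later when characterizing IASSI-graphs with isolated vertices, and it is arguably tighter here, since the hypothesis of the result you cite is verified rather than left tacit. Both routes are interchangeable, and both inherit the paper's informal treatment of trivial sum sets and summands (the caveat that every $A\subseteq X$ equals $\{0\}+A$), which your argument does not worsen.
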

\begin{proof}
Let $G$ be a connected graph which admits an IASI $f$. By Proposition \ref{P-ISSG1}, if the induced function $f^{\ast}$ is injective, then $\{0\}$ can not be the set-label of any element of $G$. But, by Proposition \ref{P-ISSL3a} and Proposition \ref{P-ISSL3b}, every connected IASS-graph has a vertex with the set-label $\{0\}$. Hence, a connected graph $G$ can not have an IASSI.
\end{proof}

The problem of characterising (disconnected) graphs that admit IASSIs is relevant and interesting in this situation. Hence, we have

\begin{theorem}
A graph $G$ admits an integer additive set-sequential indexer $f$ with respect to a ground set $X$ if and only if $G$ has $\rho'$ isolated vertices, where $\rho'$ is the number of subsets of $X$ which are neither sum sets of any two subsets of $X$ nor the summands of any subsets of $X$.
\end{theorem}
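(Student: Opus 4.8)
The plan is to recast the condition ``$f$ is an IASSI'' as the statement that $f^{\ast}$ is a bijection from $V(G)\cup E(G)$ onto $\mathcal{P}(X)-\{\emptyset\}$; equivalently, the integer $\kappa$ of Proposition \ref{P-IASSL2a} is zero, so that every non-empty subset of $X$ is the set-label of exactly one vertex or of exactly one edge, but never of both. With this reformulation, both implications reduce to locating precisely which subsets are forced onto isolated vertices, and the natural candidate is the family $\mathcal{B}$ of Theorem \ref{T-IASSL4}, whose cardinality is exactly $\rho'$ (a subset $A$ is a non-trivial summand of some subset of $X$ exactly when $A+B\subseteq X$ for some $B\neq\{0\}$, which is the condition ``their sum sets are subsets of $X$'' appearing in the definition of $\mathcal{B}$).

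For the necessity I would first invoke Proposition \ref{P-ISSG1} and Remark \ref{R-IASSL0a} to place the vertex carrying $\{0\}$ in isolation, since injectivity of $f^{\ast}$ forbids the $\{0\}$-vertex from having any neighbour. Then I would run the argument of Theorem \ref{T-IASSL4} with one strengthening: given $A\in\mathcal{B}$, the subset $A$ cannot label an edge (it is not a sum set), so it labels a vertex $v$; any neighbour of $v$ with label $B\neq\{0\}$ would force the edge-label $A+B\not\subseteq X$, which is impossible, so the only admissible neighbour is the $\{0\}$-vertex. But that edge would receive label $A+\{0\}=A=f(v)$, contradicting injectivity, and in any case the $\{0\}$-vertex is already isolated. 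Hence each of the $\rho'=|\mathcal{B}|$ subsets in $\mathcal{B}$ sits on a distinct isolated vertex, which furnishes the required isolated vertices.

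For the sufficiency I would argue constructively in the spirit of Theorem \ref{IASSL5a}: place the $\rho'$ labels of $\mathcal{B}$ (which include $\{0\}$ and the subset $\{x_1,x_l\}$ of Theorem \ref{T-IASSL3a}) on the $\rho'$ isolated vertices, and then distribute the remaining subsets of $\mathcal{P}(X)-\{\emptyset\}$ over the non-isolated part of $G$ so that each such subset occurs exactly once, realising every non-trivial sum set $A+B$ as the label of an edge between vertices carrying suitable summands $A,B$ and every remaining non-sum-set subset as a vertex label. Keeping $f^{\ast}$ injective throughout --- so that no subset is used twice and no adjacency forbidden by Propositions \ref{P-ISSL3a} and \ref{P-ISSL3b} is created --- makes $f$ an IASSI with respect to $X$.

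The main obstacle I anticipate is the bookkeeping that certifies injectivity of $f^{\ast}$ in the sufficiency construction: one must exhibit the non-isolated part of $G$ with exactly the right number of vertices and edges (forced by $|V|+|E|=2^{|X|}-1$, which is Proposition \ref{P-IASSL2a} with $\kappa=0$) and with an adjacency pattern that realises each sum set once and only once, while simultaneously verifying that no isolated vertex is accidentally assigned a subset that is a genuine sum set or summand. Establishing that this design is always achievable --- and, conversely, that no isolated vertex can carry a label outside $\mathcal{B}$, which pins the count at exactly $\rho'$ --- is the delicate step, whereas the necessity direction follows routinely from the injectivity-strengthened version of Theorem \ref{T-IASSL4}.
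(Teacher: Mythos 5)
Your proposal follows essentially the same route as the paper's own proof: for necessity, the paper likewise combines Theorem \ref{T-IASSL4} (the labels in $\mathcal{B}$ can only be adjacent to the $\{0\}$-vertex) with Remark \ref{R-IASSL0a} (injectivity of $f^{\ast}$ forces that vertex to be isolated) to conclude that all $\rho'=|\mathcal{B}|$ labels of $\mathcal{B}$ sit on isolated vertices, and for sufficiency it gives exactly your construction --- place the sets of $\mathcal{B}$ on the isolated vertices and distribute the remaining subsets injectively so that sum sets fall on edges. The only caveat is that the ``delicate step'' you flag in the sufficiency direction (certifying that the non-isolated part of $G$ can actually realise every sum set exactly once, which pins down both the injectivity of $f^{\ast}$ and the count $\rho'$) is exactly the point the paper itself passes over with the single word ``Clearly,'' so your sketch is no less complete than the published argument.
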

\begin{proof}
Let $f$ be an IASI defined on $G$, with respect to a ground set $X$. Let $\mathcal{B}$ be the collection of subsets of $X$ which are neither sum sets of any two subsets of $X$ nor the summands of any subsets of $X$.

Assume that $f$ is an IASSI of $G$. Then, the induced function $f^{\ast}$ is an injective function.  We have already showed that $\mathcal{B}$ is a non-empty set. By Theorem \ref{T-IASSL4}, $\{0\}$ must be the set-label of one vertex $v$ in $G$ and the vertices of $G$ with set-labels from $\mathcal{B}$ can be adjacent only to the vertex $v$. By Remark \ref{R-IASSL0a}, $v$ must be an isolated vertex in $G$. Also note that $\{0\}$ is lso an element in $\mathcal{B}$. Therefore, all the vertices which have set-labels from $\mathcal{B}$ must also be isolated vertices of $G$. Hence $G$ has $\rho'=|\mathcal{B}|$ isolated vertices.

Conversely, assume that $G$ has $\rho'=|\mathcal{B}|$ isolated vertices. Then, label the  isolated vertices of $G$ by the sets in $\mathcal{B}$ in an injective manner. Now, label the other vertices of $G$ in an injective manner by other non-empty subsets of $X$ which are not the sum sets of subsets of  $X$ in such a way that the subsets of $X$ which are the sum sets of subsets of $X$ are the set-labels of the edges of $G$. Clearly, this labeling is an IASSI of $G$.
\end{proof}

Analogous to Theorem \ref{IASSL5a}, we can also establish the existence of an IASSI-graph with respect to a given non-empty ground set $X$.

\begin{theorem}
For any non-empty finite set $X$ of non-negative integers, there exists a graph $G$ which admits an IASSI with respect to $X$.
\end{theorem}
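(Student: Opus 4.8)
The plan is to give a constructive existence proof that parallels the argument of Theorem \ref{IASSL5a}, but arranged so that the induced map $f^{\ast}$ is injective, and then to read off the conclusion from the characterisation theorem proved just above. First I would split the non-empty subsets of $X$ into three families according to their arithmetic role. Let $\mathcal{A}'$ be the subsets of $X$ that are (non-trivial) sum sets of two subsets of $X$; let $\mathcal{B}$ be the subsets that are neither sum sets nor summands of any subset of $X$ (so $\rho'=|\mathcal{B}|$, and note $\{0\}\in\mathcal{B}$); and let $\mathcal{C}$ consist of the remaining subsets, namely those that are not sum sets but do occur as summands. Thus $\mathcal{P}(X)-\{\emptyset\}=\mathcal{A}'\sqcup\mathcal{B}\sqcup\mathcal{C}$ is a disjoint union, and $\mathcal{A}:=\mathcal{B}\sqcup\mathcal{C}$ is exactly the family of non-sum-set subsets.

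Next I would construct $G$. Since the members of $\mathcal{A}$ can never be edge labels (an edge label must be a sum set), they must all label vertices; since the members of $\mathcal{A}'$ are sum sets, I would use them as the edge labels. Concretely: create $|\mathcal{B}|$ isolated vertices and label them bijectively by the sets in $\mathcal{B}$; create one vertex for each member of $\mathcal{C}$, labelled by that member; and for each $A_r'\in\mathcal{A}'$ write $A_r'=A_i+A_j$ with $A_i,A_j\in\mathcal{C}$ and join the corresponding vertices by an edge $e_r$, so that $f^+(e_r)=A_r'$. Keeping the $\mathcal{B}$-vertices (in particular the vertex labelled $\{0\}$) isolated is precisely what the characterisation theorem demands, and it is forced on us anyway: no member of $\mathcal{B}$ is a summand, so a vertex carrying such a label can have no neighbour other than one labelled $\{0\}$, which must itself be isolated by Remark \ref{R-IASSL0a}.

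Then I would verify the two defining properties. Every non-empty subset of $X$ occurs as a label exactly once --- the sets of $\mathcal{A}=\mathcal{B}\sqcup\mathcal{C}$ on vertices and the sets of $\mathcal{A}'$ on edges --- so $f^{\ast}(G)=f(V(G))\cup f^+(E(G))=\mathcal{P}(X)-\{\emptyset\}$, which is the set-sequential condition. Because $\mathcal{A}\cap\mathcal{A}'=\emptyset$, no subset labels both a vertex and an edge, so $f^{\ast}$ is injective; hence $f$ is an IASSI and $G$ is the required IASSI-graph, as already anticipated by the converse direction of the characterisation theorem.

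The step I expect to be the main obstacle is guaranteeing, in the edge-construction, that each sum set $A_r'$ admits a decomposition $A_r'=A_i+A_j$ with \emph{both} summands lying in $\mathcal{C}$, rather than with a summand that is itself a sum set or a member of $\mathcal{B}$. A summand can never lie in $\mathcal{B}$, since $\mathcal{B}$-sets are by definition not summands; the delicate point is to avoid summands that are themselves non-trivial sum sets. I would handle this by choosing a decomposition whose summands are minimal with respect to further decomposition: if a chosen summand were a non-trivial sum set it could be refined, and iterating this refinement terminates in summands that are non-sum-sets, hence in $\mathcal{C}$. This is the same existence-of-decomposition fact tacitly invoked in Theorem \ref{IASSL5a} and in the converse of the characterisation theorem, and once it is made explicit the construction, and therefore the theorem, goes through.
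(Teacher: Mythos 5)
Your plan follows exactly the route the paper gestures at (the paper actually states this theorem \emph{without} proof, saying only that it is ``analogous to Theorem \ref{IASSL5a}''), but the step you yourself flagged as the main obstacle is a genuine gap, and it cannot be closed by your refinement argument, because the claim you need is false. Take $X=\{0,1,2,3,4\}$ and $A=\{4\}$. Its only non-trivial decompositions as a sum of two subsets of $X$ are $\{4\}=\{1\}+\{3\}$ and $\{4\}=\{2\}+\{2\}$; since $\{3\}=\{1\}+\{2\}$ and $\{2\}=\{1\}+\{1\}$ are themselves sum sets, $\{4\}$ has \emph{no} decomposition with both summands in $\mathcal{C}$, so in your construction it can never receive an edge. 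Your iterative refinement also visibly fails here: refining $\{3\}$ inside $\{1\}+\{3\}$ yields the three summands $\{1\}+\{1\}+\{2\}$, whose only regroupings into two subsets of $X$ are $\{1\}+\{3\}$ and $\{2\}+\{2\}$ again, so the iteration cycles instead of terminating; and in general a regrouping $D+(E+C)$ is not even admissible, since $E+C$ need not be a subset of $X$. There is a second, independent obstruction coming from injectivity of $f$: a decomposition with \emph{equal} summands is useless, because an edge must join two distinct vertices, which under an IASL carry distinct labels. Already for $X=\{0,1,2\}$ the sum sets $\{2\}=\{1\}+\{1\}$ and $\{0,1,2\}=\{0,1\}+\{0,1\}$ admit no other non-trivial decompositions, so neither can ever be an edge label of an IASSI-graph; your construction breaks for this $X$ as well.

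The repair is to abandon the principle that every sum set must be an edge label; the set-sequential condition only demands that every non-empty subset label \emph{something}. Let $\mathcal{E}$ be the family of subsets of $X$ expressible as $P+Q$ with $P\neq Q$ both non-sum-set subsets of $X$; label edges by the members of $\mathcal{E}$ exactly as you propose, and let every remaining non-empty subset (including recalcitrant sum sets such as $\{2\}$ and $\{4\}$ above) label a vertex, isolated if it has no admissible neighbour. Vertex labels and edge labels are then disjoint by construction, each edge label is the sum of its endpoints' labels, distinct edges get distinct labels, and $f^{\ast}$ is an injection onto $\mathcal{P}(X)-\{\emptyset\}$, so $G$ is an IASSI-graph; in the degenerate case $\mathcal{E}=\emptyset$ one simply gets a graph of isolated vertices, which still satisfies Definition \ref{D-IASSG}. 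Be aware that the unproved decomposition claim you tried to make explicit is also tacit (and equally false) in the paper's own Theorem \ref{IASSL5a} and in the converse half of its characterisation theorem, which is why leaning on that theorem, as your last paragraph does, cannot rescue the argument: for $X=\{0,1,2\}$ the vertices labelled $\{2\}$ and $\{0,1,2\}$ are forced to be isolated even though these sets do not lie in $\mathcal{B}$, so an IASSI-graph can need strictly more than $\rho'$ isolated vertices.
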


Figure \ref{fig:G-IASSI2} illustrates the existence of an IASSL for  a given graph with isolated vertices.

\begin{figure}[h!]
\centering
\includegraphics[width=0.7\linewidth]{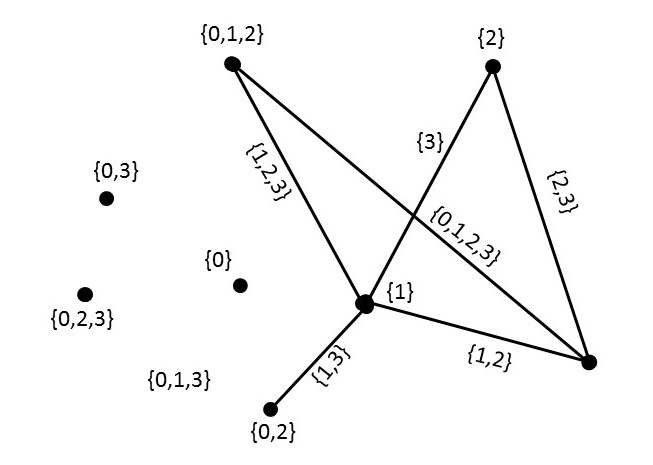}
\caption{}
\label{fig:G-IASSI2}
\end{figure}

\section{Conclusion}

In this paper, we have discussed an extension of set-sequential labeling of graphs to sum-set labelings and have studied the properties of certain graphs that admit IASSLs.  Certain problems regarding the complete characterisation of IASSI-graphs are still open.

We note that the admissibility of integer additive set-indexers by the graphs depends upon the nature of elements in $X$. A graph may admit an IASSL for some ground sets and may not admit an IASSL for some other ground sets. Hence, choosing a ground set is very important to discuss about IASSI-graphs.

Some of the areas which seem to be promising for further studies are listed below.

\begin{problem}{\rm
Characterise different graph classes which admit integer additive set-sequential labelings.}
\end{problem}

\begin{problem}{\rm
Verify the existence of integer additive set-sequential labelings for different graph operations, graph products and graph products.}
\end{problem}

The integer additive set-indexers under which the vertices of a given graph are labeled by different standard sequences of non negative integers, are also worth studying.   All these facts highlight a wide scope for further studies in this area.

\end{document}